
\documentclass[12pt,reqno]{amsart}
\usepackage{eurosym}
\usepackage{amsfonts}
\usepackage{amsmath}
\usepackage[bookmarksnumbered, colorlinks, plainpages]{hyperref}
\usepackage{graphicx}
\usepackage{epsfig}
\usepackage{epstopdf}
\usepackage{color}
\usepackage{caption}
\usepackage{subcaption}

\setcounter{MaxMatrixCols}{10}

\hypersetup{colorlinks=true,linkcolor=red, anchorcolor=green, citecolor=cyan, urlcolor=red, filecolor=magenta, pdftoolbar=true}
\textheight 22.5truecm \textwidth 14.5truecm
\setlength{\oddsidemargin}{0.35in}\setlength{\evensidemargin}{0.35in}
\setlength{\topmargin}{-.5cm}
\newtheorem{theorem}{Theorem}[section]

\newtheorem{proposition}[theorem]{Proposition}
\newtheorem{corollary}[theorem]{Corollary}
\theoremstyle{definition}
\newtheorem{definition}[theorem]{Definition}
\newtheorem{example}[theorem]{Example}

\theoremstyle{remark}
\newtheorem{remark}[theorem]{Remark}
\numberwithin{equation}{section}

\begin{document}
\date{{\scriptsize Received: , Accepted: .}}
\title[Pata Zamfirescu Type Fixed-Disc Results]{Pata Zamfirescu Type
Fixed-Disc Results with a Proximal Application}
\subjclass[2010]{Primary 54H25; Secondary 47H09, 47H10.}
\keywords{Fixed disc, Pata Zamfirescu type $x_{0}$-mapping, proximity point,
proximity circle.}

\begin{abstract}
This paper is concerning to the geometric study of fixed points of a
self-mapping on a metric space. We establish new generalized contractive
conditions which ensure that a self-mapping has a fixed disc or a fixed circle.
We introduce the notion of a best proximity circle and explore some proximal
contractions for a non-self-mapping as an application. Necessary
illustrative examples are presented to highlight the importance of the
obtained results.
\end{abstract}

\author[N. \"{O}ZG\"{U}R]{N\.{I}HAL \"{O}ZG\"{U}R}
\address[Nihal \"{O}zg\"{u}r]{Bal\i kesir University, Department of
Mathematics, 10145 Bal\i kesir, TURKEY}
\email{nihal@balikesir.edu.tr}
\author[N. TA\c{S}]{N\.{I}HAL TA\c{S}}
\address[Nihal Ta\c{s}]{Bal\i kesir University, Department of Mathematics,
10145 Bal\i kesir, TURKEY}
\email{nihaltas@balikesir.edu.tr}
\maketitle


\setcounter{page}{1}


\section{Introduction and Motivation}

\label{sec:intro}Fixed-point theory has an important role due to solutions
of the equation $Tx=x$ where $T$ is a self-mapping on a metric (resp. some
generalized metric) space. This theory has been extensively studied with
some applications in diverse research areas such as integral equations,
differential equations, engineering, statistics, economics etc. Some
questions have been arisen for the existence and uniqueness of fixed points.
Some fixed-point problems are as follows:

(1) Is there always a solution of the equation $Tx=x$?

(2) What are the existence conditions for a fixed point of a self-mapping?

(3) What are the uniqueness conditions if there is a fixed point of a
self-mapping?

(4) Can the number of fixed points be more than one?

(5) If the number of fixed points is more than one, is there a geometric
interpretation of these points?

Considering the above questions, many researchers have been studied on
fixed-point theory with different aspects.

Some generalized contractive conditions have been investigated to guarantee
the existence and uniqueness of a fixed point of a self-mapping. For
example, in \cite{Zamfirescu}, an existence theorem was given for a
generalized contraction mapping. In \cite{Pata}, a refinement of the
classical Banach contraction principle was obtained. A new generalization of
these results was derived by using both of the above contraction conditions
in \cite{Jacob}.

In the cases in which the fixed-point equation $Tx=x$ has no solution, the
notion of \textquotedblleft \textit{best proximity point}\textquotedblright\
has been appeared as an approximate solution $x$ such that the error $%
d\left( x,Tx\right) $ is minimum. For example, the existence of best
proximity point was investigated using the Pata type proximal mappings in
\cite{Jacob}. These results are the generalizations of ones obtained in \cite%
{Pata}.

If a fixed point is not unique then the geometry of the fixed points of a
self-mapping is an attractive problem. For this purpose, a recent approach
called \textquotedblleft \emph{fixed-circle problem}\textquotedblright\
(resp. \textquotedblleft \emph{fixed-disc problem}\textquotedblright ) has
been studied by various techniques (see \cite{Mlaiki}, \cite{Mlaiki-Axioms},
\cite{Ozgur-Tas-malaysian}, \cite{Ozgur-Tas-circle-thesis}, \cite%
{Ozgur-Tas-Celik}, \cite{ozgur-aip}, \cite{Ozgur-simulation}, \cite%
{Pant-Ozgur-Tas}, \cite{Tas math}, \cite{Tas}, \cite{Tas-fbed}). For
example, in \cite{Ozgur-simulation}, some fixed-disc results have been
obtained using the set of simulation functions on a metric space.

In this paper, mainly, we focus on the geometric study of fixed points of a
self-mapping on a metric space. New generalized contractive conditions are
established for a self-mapping to have a fixed disc or a fixed circle with
some illustrative examples. As an application, we introduce the notion of a
best proximity circle and explore some proximal contractions for a
non-self-mapping.

\section{Main Results}

\label{sec:1} Throughout the section, we assume that $(X,d)$ is a metric
space, $T:X\rightarrow X$ is a self-mapping and $D[x_{0},r]$ is a disc
defined as%
\begin{equation*}
D[x_{0},r]=\left\{ u\in X:d(u,x_{0})\leq r\right\} \text{.}
\end{equation*}

If the self-mapping $T$ fixes all of the points in the disc $D[x_{0},r]$,
that is, $Tu=u$ for all $u\in D[x_{0},r]$, then $D[x_{0},r]$ is called as
the fixed disc of $T$.

To obtain new fixed-disc results, we modify the notion of a Zamfirescu
mapping on metric spaces (see \cite{Zamfirescu} for more details).

\begin{definition}
\label{def1} The self-mapping $T$ is called a Zamfirescu type $x_{0}$%
-mapping if there exist $x_{0}\in X$ and $a,b\in \lbrack 0,1)$ such that%
\begin{equation*}
d(Tu,u)>0\Longrightarrow d(Tu,u)\leq \max \left\{ ad(u,x_{0}),\frac{b}{2}%
\left[ d(Tx_{0},u)+d(Tu,x_{0})\right] \right\} \text{,}
\end{equation*}%
for all $u\in X$.
\end{definition}

\begin{proposition}
\label{prop1} If $T$ is a Zamfirescu type $x_{0}$-mapping with $x_{0}\in X$
then we have $Tx_{0}=x_{0}$.
\end{proposition}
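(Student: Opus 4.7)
The plan is to argue by contradiction, pushing the defining inequality of a Zamfirescu type $x_{0}$-mapping onto the point $u=x_{0}$ itself. Suppose $Tx_{0}\neq x_{0}$, so that $d(Tx_{0},x_{0})>0$. Then the hypothesis of Definition \ref{def1} is activated at $u=x_{0}$, and we obtain
\[
d(Tx_{0},x_{0})\leq \max\left\{a\,d(x_{0},x_{0}),\ \tfrac{b}{2}\bigl[d(Tx_{0},x_{0})+d(Tx_{0},x_{0})\bigr]\right\}.
\]
The first term in the max collapses to $0$ because $d(x_{0},x_{0})=0$, and the second simplifies to $b\,d(Tx_{0},x_{0})$, so the whole estimate reduces to $d(Tx_{0},x_{0})\leq b\,d(Tx_{0},x_{0})$.

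Next I would rearrange as $(1-b)\,d(Tx_{0},x_{0})\leq 0$ and invoke $b\in[0,1)$, hence $1-b>0$, to conclude $d(Tx_{0},x_{0})\leq 0$. Combined with the strict positivity $d(Tx_{0},x_{0})>0$ assumed for the sake of contradiction, this is impossible, so the contrapositive forces $d(Tx_{0},x_{0})=0$, i.e., $Tx_{0}=x_{0}$.

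There is no real obstacle here: the entire argument is a one-line substitution followed by a contraction-factor cancellation. The only thing to be careful about is noting that the implication in Definition \ref{def1} is triggered precisely by the contradiction hypothesis $d(Tx_{0},x_{0})>0$, so one does not need to assume the contractive bound holds unconditionally—it is supplied exactly when needed.
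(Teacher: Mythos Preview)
Your proof is correct and follows essentially the same approach as the paper: substitute $u=x_{0}$ into the defining inequality, simplify the max to $b\,d(Tx_{0},x_{0})$, and derive a contradiction from $b\in[0,1)$. The only cosmetic difference is that you explicitly rewrite the contradiction as $(1-b)\,d(Tx_{0},x_{0})\leq 0$, whereas the paper simply notes that $d(Tx_{0},x_{0})\leq b\,d(Tx_{0},x_{0})$ is already a contradiction.
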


\begin{proof}
Let $T$ be a Zamfirescu type $x_{0}$-mapping with $x_{0}\in X$. Assume that $%
Tx_{0}\neq x_{0}$. Then we have $d(Tx_{0},x_{0})>0$ and using the Zamfirescu
type $x_{0}$-mapping hypothesis, we get%
\begin{eqnarray*}
d(Tx_{0},x_{0}) &\leq &\max \left\{ ad(x_{0},x_{0}),\frac{b}{2}\left[
d(Tx_{0},x_{0})+d(Tx_{0},x_{0})\right] \right\}  \\
&=&\max \left\{ 0,bd(Tx_{0},x_{0})\right\} =bd(Tx_{0},x_{0})\text{,}
\end{eqnarray*}%
a contradiction because of $b\in \lbrack 0,1)$. Consequently, $T$ fixes the
point $x_{0}\in X$, that is, $Tx_{0}=x_{0}$.
\end{proof}

Let the number $r$ be defined as follows:%
\begin{equation}
r=\inf \left\{ d(Tu,u):Tu\neq u,u\in X\right\} \text{.}
\label{definition of r}
\end{equation}

\begin{theorem}
\label{thm1} If $T$ is a Zamfirescu type $x_{0}$-mapping with $x_{0}\in X$
and $d(Tu,x_{0})\leq r$ for each $u\in D(x_{0},r)-\left\{ x_{0}\right\} $,
then $D[x_{0},r]$ is a fixed disc of $T$.
\end{theorem}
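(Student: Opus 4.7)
The plan is to prove the theorem by contradiction, combining Proposition \ref{prop1} (which gives $Tx_0=x_0$ for free) with the two standard ingredients of fixed-circle arguments: the lower bound on $d(Tu,u)$ forced by the definition of $r$, and the upper bound provided by the Zamfirescu-type contractive inequality.

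First I would reduce to the non-trivial case $r>0$, since if $r=0$ the disc $D[x_0,r]$ collapses to $\{x_0\}$ and Proposition \ref{prop1} already finishes the job. Assuming $r>0$, I would pick an arbitrary $u\in D[x_0,r]$; if $u=x_0$ we are done, so suppose $u\neq x_0$ and, for contradiction, that $Tu\neq u$. Then $d(Tu,u)>0$, and directly from the definition \eqref{definition of r} of $r$ as an infimum we get
\begin{equation*}
d(Tu,u)\geq r.
\end{equation*}

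Next I would apply the Zamfirescu type $x_0$-mapping condition to this $u$, substituting $Tx_0=x_0$ from Proposition \ref{prop1} to simplify the bracket on the right-hand side:
\begin{equation*}
d(Tu,u)\leq \max\left\{ad(u,x_0),\ \tfrac{b}{2}\bigl[d(x_0,u)+d(Tu,x_0)\bigr]\right\}.
\end{equation*}
Since $u\in D[x_0,r]$ we have $d(u,x_0)\leq r$, and by hypothesis $d(Tu,x_0)\leq r$ (here I would silently assume that the disc denoted $D(x_0,r)$ in the statement means the same closed disc $D[x_0,r]$, consistent with the paper's notation). Plugging these into each branch of the max yields
\begin{equation*}
d(Tu,u)\leq \max\bigl\{ar,\ \tfrac{b}{2}(r+r)\bigr\}=\max\{a,b\}\cdot r<r,
\end{equation*}
because $a,b\in[0,1)$. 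This contradicts the lower bound $d(Tu,u)\geq r$ obtained above, so $Tu=u$ for every $u\in D[x_0,r]$, and $D[x_0,r]$ is a fixed disc of $T$.

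There is no genuine obstacle in the argument; the only point that requires any care is making sure both terms inside the max are controlled by $r$, which is exactly why the extra hypothesis $d(Tu,x_0)\leq r$ on points of the disc is imposed — without it, the second branch of the max could not be bounded and the contradiction with the infimum would fail.
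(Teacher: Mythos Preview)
Your proof is correct and follows essentially the same route as the paper's: reduce to $r>0$, use Proposition~\ref{prop1} to replace $Tx_{0}$ by $x_{0}$, bound both branches of the max by $r$ using $d(u,x_{0})\leq r$ and the hypothesis $d(Tu,x_{0})\leq r$, and then contradict the definition of $r$ via $\max\{a,b\}<1$. The only cosmetic difference is that the paper writes ``without loss of generality $a\geq b$'' where you write $\max\{a,b\}$, and your observation about the notational slip $D(x_{0},r)$ versus $D[x_{0},r]$ is well taken.
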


\begin{proof}
Suppose that $r=0$. Then we get $D[x_{0},r]=\left\{ x_{0}\right\} $. By
Proposition \ref{prop1}, we have $Tx_{0}=x_{0}$ whence $D[x_{0},r]$ is a
fixed disc of $T$.

Now assume that $r>0$ and $u\in D[x_{0},r]-\left\{ x_{0}\right\} $ is any
point such that $Tu\neq u$. Then we have $d(Tu,u)>0$. Using the Zamfirescu
type $x_{0}$-mapping property, the hypothesis $d(Tu,x_{0})\leq r$ and
Proposition \ref{prop1}, we get%
\begin{eqnarray}
d(Tu,u) &\leq &\max \left\{ ad(u,x_{0}),\frac{b}{2}\left[
d(Tx_{0},u)+d(Tu,x_{0})\right] \right\}   \notag \\
&\leq &\max \left\{ ar,br\right\} \text{.}  \label{eqn2}
\end{eqnarray}%
Without loss of generality we can assume $a\geq b$. Then using the
inequality (\ref{eqn2}), we obtain%
\begin{equation*}
d(Tu,u)\leq ar\text{,}
\end{equation*}%
which is a contradiction with the definition of $r$ because of $a\in \lbrack
0,1)$. Consequently, it should be $Tu=u$ and so $D[x_{0},r]$ is a fixed disc
of $T$.
\end{proof}

From now on, $\Theta $ denotes the class of all increasing functions $\Psi
:[0,1]\rightarrow \lbrack 0,\infty )$ with $\Psi (0)=0$. Modifying the
notion of a Pata type contraction (see \cite{Pata}) and using this class $%
\Theta $, we give the following definition that exclude the continuity
hypothesis on $\Psi $.

\begin{definition}
\label{def2} Let $\Lambda \geq 0$, $\alpha \geq 1$ and $\beta \in \lbrack
0,\alpha ]$ be any constants. Then $T$ is called a Pata type $x_{0}$-mapping
if there exist $x_{0}\in X$ and $\Psi \in \Theta $ such that%
\begin{equation*}
d(Tu,u)>0\Longrightarrow d(Tu,u)\leq \frac{1-\varepsilon }{2}\left\Vert
u\right\Vert +\Lambda \varepsilon ^{\alpha }\Psi (\varepsilon )\left[
1+\left\Vert u\right\Vert +\left\Vert Tu\right\Vert \right] ^{\beta }\text{,}
\end{equation*}%
for all $u\in X$ and each $\varepsilon \in \lbrack 0,1]$, where $\left\Vert
u\right\Vert =d(u,x_{0})$.
\end{definition}

\begin{proposition}
\label{prop2} If $T$ is a Pata type $x_{0}$-mapping with $x_{0}\in X$ then
we have $Tx_{0}=x_{0}$.
\end{proposition}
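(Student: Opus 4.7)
The plan is to adapt the contradiction strategy used in the proof of Proposition~\ref{prop1}: I would assume $Tx_{0}\neq x_{0}$ and substitute the special point $u=x_{0}$ into the Pata type $x_{0}$-mapping inequality from Definition~\ref{def2}. The key observation is that $\left\Vert x_{0}\right\Vert =d(x_{0},x_{0})=0$, so the first summand on the right-hand side vanishes identically in $\varepsilon$, and the remaining summand can then be driven to $0$ by a suitable choice of $\varepsilon$.

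Concretely, suppose for contradiction that $d(Tx_{0},x_{0})>0$. Applying the defining inequality at $u=x_{0}$, using $\left\Vert x_{0}\right\Vert =0$, reduces it to
\begin{equation*}
d(Tx_{0},x_{0})\leq \Lambda \varepsilon ^{\alpha }\Psi (\varepsilon )\bigl[1+d(Tx_{0},x_{0})\bigr]^{\beta },
\end{equation*}
valid for every $\varepsilon \in [0,1]$. I would then specialize to $\varepsilon =0$: since $\alpha \geq 1$ gives $\varepsilon ^{\alpha }=0$, and since $\Psi \in \Theta $ satisfies $\Psi (0)=0$, the right-hand side equals $0$. This forces $d(Tx_{0},x_{0})\leq 0$, contradicting the standing assumption, and hence $Tx_{0}=x_{0}$.

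The only (very mild) point to check is that $\varepsilon =0$ is an admissible parameter; the hypothesis explicitly permits $\varepsilon \in [0,1]$, so nothing further is needed. In particular, no continuity of $\Psi $ and no limiting argument $\varepsilon \to 0^{+}$ is required, precisely because the class $\Theta $ is defined so that the exact equality $\Psi (0)=0$ holds. The structure therefore parallels the proof of Proposition~\ref{prop1}: there, plugging $u=x_{0}$ collapsed the right-hand side to $b\,d(Tx_{0},x_{0})$ with $b\in \lbrack 0,1)$; here, the analogous collapse produces the sharper value $0$, so no contraction-constant bookkeeping is even needed.
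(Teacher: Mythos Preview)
Your proof is correct and follows essentially the same approach as the paper: assume $Tx_{0}\neq x_{0}$, substitute $u=x_{0}$ into the Pata type $x_{0}$-mapping inequality, use $\left\Vert x_{0}\right\Vert =0$ to eliminate the first term, and then set $\varepsilon =0$ to obtain $d(Tx_{0},x_{0})\leq 0$. The additional remarks you include about the admissibility of $\varepsilon =0$ and the role of $\Psi (0)=0$ are sound and simply make explicit what the paper leaves implicit.
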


\begin{proof}
Let $T$ be a Pata type $x_{0}$-mapping with $x_{0}\in X$. Assume that $%
Tx_{0}\neq x_{0}$. Then we have $d(Tx_{0},x_{0})>0$. Using the Pata type $%
x_{0}$-mapping hypothesis, we get%
\begin{eqnarray}
d(Tx_{0},x_{0}) &\leq &\frac{1-\varepsilon }{2}\left\Vert x_{0}\right\Vert
+\Lambda \varepsilon ^{\alpha }\Psi (\varepsilon )\left[ 1+\left\Vert
x_{0}\right\Vert +\left\Vert Tx_{0}\right\Vert \right] ^{\beta }  \notag \\
&=&\frac{1-\varepsilon }{2}d(x_{0},x_{0})+\Lambda \varepsilon ^{\alpha }\Psi
(\varepsilon )\left[ 1+d(x_{0},x_{0})+d(Tx_{0},x_{0})\right] ^{\beta }
\notag \\
&=&\Lambda \varepsilon ^{\alpha }\Psi (\varepsilon )\left[ 1+d(Tx_{0},x_{0})%
\right] ^{\beta }\text{.}  \label{eqn3}
\end{eqnarray}%
For $\varepsilon =0$, using the inequality (\ref{eqn3}), we obtain%
\begin{equation*}
d(Tx_{0},x_{0})\leq 0\text{,}
\end{equation*}%
whence it should be $Tx_{0}=x_{0}$.
\end{proof}

\begin{theorem}
\label{thm2} If $T$ is a Pata type $x_{0}$-mapping with $x_{0}\in X$ then $%
D[x_{0},r]$ is a fixed disc of $T$.
\end{theorem}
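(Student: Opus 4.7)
The plan is to follow the template of Theorem~\ref{thm1}, splitting on whether $r=0$ or $r>0$, and then exploit the free parameter $\varepsilon$ in the Pata-type inequality by specializing to $\varepsilon=0$.

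First I would dispose of the degenerate case. If $r=0$, then $D[x_{0},r]=\{x_{0}\}$, and Proposition~\ref{prop2} already gives $Tx_{0}=x_{0}$; hence $D[x_{0},r]$ is trivially a fixed disc of $T$.

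For the main case $r>0$, I would argue by contradiction. Suppose there exists $u\in D[x_{0},r]$ with $Tu\neq u$; then $d(Tu,u)>0$, so the Pata type $x_{0}$-mapping condition yields, for every $\varepsilon\in[0,1]$,
\begin{equation*}
d(Tu,u)\leq \frac{1-\varepsilon}{2}\lVert u\rVert+\Lambda\varepsilon^{\alpha}\Psi(\varepsilon)\bigl[1+\lVert u\rVert+\lVert Tu\rVert\bigr]^{\beta}.
\end{equation*}
The key move is to set $\varepsilon=0$. Since $\alpha\geq 1$ and $\Psi(0)=0$ (because $\Psi\in\Theta$), the second summand vanishes and I obtain
\begin{equation*}
d(Tu,u)\leq \tfrac{1}{2}\lVert u\rVert=\tfrac{1}{2}d(u,x_{0})\leq \tfrac{r}{2}.
\end{equation*}
On the other hand, by the very definition~\eqref{definition of r} of $r$, any $u$ with $Tu\neq u$ satisfies $d(Tu,u)\geq r$. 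Combining these gives $r\leq d(Tu,u)\leq r/2$, contradicting $r>0$. Hence no such $u$ exists, i.e.\ $Tu=u$ for every $u\in D[x_{0},r]$, proving that $D[x_{0},r]$ is a fixed disc of $T$.

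I do not anticipate a serious obstacle: the proof is essentially one substitution. The only subtlety worth flagging is that the argument does \emph{not} need the auxiliary hypothesis $d(Tu,x_{0})\leq r$ that was required in Theorem~\ref{thm1}; the term containing $\lVert Tu\rVert$ is killed outright by the choice $\varepsilon=0$, so no control on $\lVert Tu\rVert$ is necessary. This is exactly the reason why the Pata-type formulation gives a cleaner fixed-disc statement than the Zamfirescu-type one.
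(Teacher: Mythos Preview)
Your proof is correct and follows essentially the same approach as the paper: split into the cases $r=0$ and $r>0$, and in the latter specialize the Pata-type inequality at $\varepsilon=0$ to obtain $d(Tu,u)\leq r/2$, contradicting the definition of $r$. Your closing remark about why the hypothesis $d(Tu,x_{0})\leq r$ is unnecessary here is apt and matches the paper's formulation of the theorem.
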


\begin{proof}
Suppose that $r=0$. Then we get $D[x_{0},r]=\left\{ x_{0}\right\} $. By
Proposition \ref{prop2}, we have $Tx_{0}=x_{0}$ whence $D[x_{0},r]$ is a
fixed disc of $T$. Now assume that $r>0$ and $u\in D[x_{0},r]-\left\{
x_{0}\right\} $ is any point such that $Tu\neq u$. Then we have $d(Tu,u)>0$.
Using the Pata type $x_{0}$-mapping property, we get%
\begin{equation}
d(Tu,u)\leq \frac{1-\varepsilon }{2}\left\Vert u\right\Vert +\Lambda
\varepsilon ^{\alpha }\Psi (\varepsilon )\left[ 1+\left\Vert u\right\Vert
+\left\Vert Tu\right\Vert \right] ^{\beta }\text{.}  \label{eqn4}
\end{equation}%
For $\varepsilon =0$, using the inequality (\ref{eqn4}), we obtain%
\begin{equation*}
d(Tu,u)\leq \frac{\left\Vert u\right\Vert }{2}=\frac{d(u,x_{0})}{2}\leq
\frac{r}{2}\text{,}
\end{equation*}%
a contradiction with the definition of $r$. Consequently, it should be $Tu=u$%
, that is, $D[x_{0},r]$ is a fixed disc of $T$.
\end{proof}

Combining the notion of a Zamfirescu type $x_{0}$-mapping and a Pata type $%
x_{0}$-mapping, we define the following notion inspiring the concept of a
Pata type Zamfirescu mapping \cite{Jacob}.

\begin{definition}
\label{def3} If there exist $x_{0}\in X$ and $\Psi \in \Theta $ such that%
\begin{equation*}
d(Tu,u)>0\Longrightarrow d(Tu,u)\leq \frac{1-\varepsilon }{2}%
M(u,x_{0})+\Lambda \varepsilon ^{\alpha }\Psi (\varepsilon )\left[
1+\left\Vert u\right\Vert +\left\Vert x_{0}\right\Vert +\left\Vert
Tu\right\Vert +\left\Vert Tx_{0}\right\Vert \right] ^{\beta }\text{,}
\end{equation*}%
for all $u\in X$ and each $\varepsilon \in \lbrack 0,1]$, where $\left\Vert
u\right\Vert =d(u,x_{0})$, $\Lambda \geq 0$, $\alpha \geq 1$, $\beta \in
\lbrack 0,\alpha ]$ are constants and%
\begin{equation*}
M(u,v)=\max \left\{ d(u,v),\frac{d(Tu,u)+d(Tv,v)}{2},\frac{d(Tv,u)+d(Tu,v)}{2%
}\right\} \text{,}
\end{equation*}%
then $T$ is called a Pata Zamfirescu type $x_{0}$-mapping with respect to $%
\Psi \in \Theta $.
\end{definition}

Now we compare a Zamfirescu type $x_{0}$-mapping and a Pata Zamfirescu type $%
x_{0}$-mapping. Let $\gamma =\max \left\{ a,b\right\} $ in Definition \ref%
{def1} and let us consider the Bernoulli's inequality $1+rt\leq (1+t)^{r}$, $%
r\geq 1$ and $t\in \lbrack -1,\infty )$. Then we have%
\begin{eqnarray*}
d(Tu,u) &>&0 \\
&\Longrightarrow &d(Tu,u)\leq \max \left\{ ad(u,x_{0}),\frac{b}{2}\left[
d(Tx_{0},u)+d(Tu,x_{0})\right] \right\} \\
&\leq &\gamma \max \left\{ d(u,x_{0}),\frac{d(Tx_{0},u)+d(Tu,x_{0})}{2}%
\right\} \\
&\leq &\gamma \max \left\{ d(u,x_{0}),\frac{d(Tu,u)+d(Tx_{0},x_{0})}{2},%
\frac{d(Tx_{0},u)+d(Tu,x_{0})}{2}\right\} \\
&\leq &\frac{1-\varepsilon }{2}\max \left\{ d(u,x_{0}),\frac{%
d(Tu,u)+d(Tx_{0},x_{0})}{2},\frac{d(Tx_{0},u)+d(Tu,x_{0})}{2}\right\} \\
&&+\left( \gamma +\frac{\varepsilon -1}{2}\right) \left[ 1+\max \left\{
\left\Vert u\right\Vert +\left\Vert x_{0}\right\Vert ,\frac{\left\Vert
u\right\Vert +\left\Vert x_{0}\right\Vert +\left\Vert Tu\right\Vert
+\left\Vert Tx_{0}\right\Vert }{2}\right\} \right] \\
&\leq &\frac{1-\varepsilon }{2}\max \left\{ d(u,x_{0}),\frac{%
d(Tu,u)+d(Tx_{0},x_{0})}{2},\frac{d(Tx_{0},u)+d(Tu,x_{0})}{2}\right\} \\
&&+\gamma \left( 1+\frac{\varepsilon -1}{\gamma }\right) \left[ 1+\left\Vert
u\right\Vert +\left\Vert x_{0}\right\Vert +\left\Vert Tu\right\Vert
+\left\Vert Tx_{0}\right\Vert \right] \\
&\leq &\frac{1-\varepsilon }{2}M(u,x_{0})+\gamma \varepsilon ^{\frac{1}{%
\gamma }}\left[ 1+\left\Vert u\right\Vert +\left\Vert x_{0}\right\Vert
+\left\Vert Tu\right\Vert +\left\Vert Tx_{0}\right\Vert \right] \\
&\leq &\frac{1-\varepsilon }{2}M(u,x_{0})+\gamma \varepsilon \varepsilon ^{%
\frac{1-\gamma }{\gamma }}\left[ 1+\left\Vert u\right\Vert +\left\Vert
x_{0}\right\Vert +\left\Vert Tu\right\Vert +\left\Vert Tx_{0}\right\Vert %
\right] \text{.}
\end{eqnarray*}%
Consequently, we obtain that a Zamfirescu type $x_{0}$-mapping is a special
case of a Pata Zamfirescu type $x_{0}$-mapping with $\Lambda =\gamma $, $%
\Psi (u)=u^{\frac{1-\gamma }{\gamma }}$ and $\alpha =\beta =1$.

In the following proposition, we see that the point $x_{0}$ in the notion of
a Pata Zamfirescu type $x_{0}$-mapping is a fixed point of a self-mapping $T$%
.

\begin{proposition}
\label{prop3} If $T$ is a Pata Zamfirescu type $x_{0}$-mapping with respect
to $\Psi \in \Theta $ for $x_{0}\in X$ then we have $Tx_{0}=x_{0}$.
\end{proposition}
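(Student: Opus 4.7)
The plan is to argue by contradiction, mimicking the strategies already used in the proofs of Propositions \ref{prop1} and \ref{prop2}, but now with the ingredients of the Pata Zamfirescu type contraction combined together. Suppose $Tx_{0}\neq x_{0}$, so that $d(Tx_{0},x_{0})>0$; then we are entitled to substitute $u=x_{0}$ into the defining implication of Definition \ref{def3}.

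The next step is to evaluate each piece of the resulting inequality. Since $\|x_{0}\|=d(x_{0},x_{0})=0$, the bracketed term simplifies to $1+\|Tx_{0}\|=1+d(Tx_{0},x_{0})$. The quantity $M(x_{0},x_{0})$ reduces, after a short inspection of its three arguments, to $d(Tx_{0},x_{0})$ itself, because the first entry is $0$ while the other two both collapse to $d(Tx_{0},x_{0})$. Hence the contractive inequality at $u=x_{0}$ becomes
\begin{equation*}
d(Tx_{0},x_{0})\leq \frac{1-\varepsilon }{2}d(Tx_{0},x_{0})+\Lambda \varepsilon ^{\alpha }\Psi (\varepsilon )\bigl[1+d(Tx_{0},x_{0})\bigr]^{\beta },
\end{equation*}
for every $\varepsilon \in [0,1]$.

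The decisive step is to specialize $\varepsilon =0$. Because $\alpha \geq 1$ we have $\varepsilon ^{\alpha }=0$, and moreover $\Psi (0)=0$ by the definition of the class $\Theta $, so the perturbation term on the right vanishes. This leaves $d(Tx_{0},x_{0})\leq \tfrac{1}{2}d(Tx_{0},x_{0})$, which forces $d(Tx_{0},x_{0})\leq 0$ and contradicts our standing assumption. Therefore $Tx_{0}=x_{0}$.

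I do not expect any serious obstacle here: the argument is essentially a careful bookkeeping exercise, and the only place where one must pay attention is the computation of $M(x_{0},x_{0})$, where one has to notice that the three candidates inside the maximum collapse in different ways when $v=x_{0}$. Once that simplification is made, the choice $\varepsilon =0$ does the rest, exactly as in Proposition \ref{prop2}.
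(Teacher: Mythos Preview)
Your proof is correct and follows exactly the paper's approach: substitute $u=x_{0}$, observe $M(x_{0},x_{0})=d(Tx_{0},x_{0})$, and set $\varepsilon=0$ to reach the contradiction $d(Tx_{0},x_{0})\leq \tfrac{1}{2}d(Tx_{0},x_{0})$. One harmless slip: when $u=x_{0}$ the bracket is $[1+\|x_{0}\|+\|x_{0}\|+\|Tx_{0}\|+\|Tx_{0}\|]=[1+2d(Tx_{0},x_{0})]$, not $[1+d(Tx_{0},x_{0})]$, but this has no effect since the term is annihilated at $\varepsilon=0$.
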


\begin{proof}
Let $T$ be a Pata Zamfirescu type $x_{0}$-mapping with respect to $\Psi \in
\Theta $ for $x_{0}\in X$. Suppose that $Tx_{0}\neq x_{0}$. Then we have $%
d(Tx_{0},x_{0})>0$. Using the Pata Zamfirescu type $x_{0}$-mapping
hypothesis, we obtain%
\begin{eqnarray}
d(Tx_{0},x_{0}) &\leq &\frac{1-\varepsilon }{2}M(x_{0},x_{0})+\Lambda
\varepsilon ^{\alpha }\Psi (\varepsilon )\left[ 1+2\left\Vert
x_{0}\right\Vert +2\left\Vert Tx_{0}\right\Vert \right] ^{\beta }  \notag \\
&=&\frac{1-\varepsilon }{2}d(Tx_{0},x_{0})+\Lambda \varepsilon ^{\alpha
}\Psi (\varepsilon )\left[ 1+2\left\Vert x_{0}\right\Vert +2\left\Vert
Tx_{0}\right\Vert \right] ^{\beta }\text{.}  \label{eqn5}
\end{eqnarray}%
For $\varepsilon =0$, using the inequality (\ref{eqn5}), we get%
\begin{equation*}
d(Tx_{0},x_{0})\leq \frac{d(Tx_{0},x_{0})}{2}\text{,}
\end{equation*}%
a contradiction. Hence it should be $Tx_{0}=x_{0}$.
\end{proof}

Using Proposition \ref{prop3}, we give the following fixed-disc theorem.

\begin{theorem}
\label{thm3} If $T$ is a Pata Zamfirescu type $x_{0}$-mapping with respect
to $\Psi \in \Theta $ for $x_{0}\in X$ and $d(Tu,x_{0})\leq r$ for each $%
u\in D[x_{0},r]-\left\{ x_{0}\right\} $, then $D[x_{0},r]$ is a fixed disc
of $T$.
\end{theorem}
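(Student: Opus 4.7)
The plan is to mirror the structure used in the proofs of Theorems \ref{thm1} and \ref{thm2}: treat the trivial case $r=0$ separately and, for $r>0$, derive a contradiction from any putative non-fixed point of $D[x_0,r]$ by specialising the free parameter to $\varepsilon=0$ in the Pata Zamfirescu inequality. When $r=0$ the disc is the singleton $\{x_0\}$, and Proposition \ref{prop3} immediately yields $Tx_0=x_0$, so the conclusion is automatic.

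Suppose therefore that $r>0$ and, arguing by contradiction, pick some $u\in D[x_0,r]\setminus\{x_0\}$ with $Tu\neq u$, so that $d(Tu,u)>0$. The decisive observation is that $\Psi(0)=0$ together with $\alpha\geq 1$ forces the perturbation term $\Lambda\varepsilon^{\alpha}\Psi(\varepsilon)[1+\|u\|+\|x_0\|+\|Tu\|+\|Tx_0\|]^{\beta}$ to vanish at $\varepsilon=0$. The defining inequality of a Pata Zamfirescu type $x_0$-mapping therefore collapses to the clean bound $d(Tu,u)\leq \tfrac{1}{2}M(u,x_0)$.

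It then remains to control $M(u,x_0)$. Proposition \ref{prop3} supplies $Tx_0=x_0$, so the three entries of the maximum defining $M(u,x_0)$ simplify respectively to $d(u,x_0)$, $d(Tu,u)/2$, and $(d(u,x_0)+d(Tu,x_0))/2$. Combining this with the two standing hypotheses $d(u,x_0)\leq r$ and $d(Tu,x_0)\leq r$ yields $M(u,x_0)\leq \max\{r,\,d(Tu,u)/2\}$, and hence $d(Tu,u)\leq \max\{r/2,\,d(Tu,u)/4\}$.

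A short case split finishes the argument. If the first entry dominates, then $d(Tu,u)\leq r/2$, which, together with $Tu\neq u$, contradicts the definition of $r$ (which forces $d(Tu,u)\geq r>0$). If the second entry dominates, then $d(Tu,u)\leq d(Tu,u)/4$ forces $d(Tu,u)=0$, again a contradiction. The only subtle point I foresee is that $d(Tu,u)$ itself appears inside $M(u,x_0)$; this self-reference is harmless because the induced coefficient $1/4$ on $d(Tu,u)$ is strictly smaller than $1$, so the inequality can be solved for $d(Tu,u)$ with no further hypotheses on $\Psi$ or $\Lambda$.
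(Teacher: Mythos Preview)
Your proof is correct and follows essentially the same approach as the paper: separate the trivial case $r=0$, then for $r>0$ specialise to $\varepsilon=0$, use Proposition~\ref{prop3} to simplify $M(u,x_0)$, bound it by $\max\{r,\,d(Tu,u)/2\}$ via the hypotheses $d(u,x_0)\leq r$ and $d(Tu,x_0)\leq r$, and finish with the same two-case contradiction. Your bound $d(Tu,u)\leq d(Tu,u)/4$ in the second case is in fact sharper than the paper's stated $d(Tu,u)\leq d(Tu,u)/2$, but both yield the same contradiction.
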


\begin{proof}
Suppose that $r=0$. Then we get $D[x_{0},r]=\left\{ x_{0}\right\} $. By
Proposition \ref{prop3}, we have $Tx_{0}=x_{0}$ whence $D[x_{0},r]$ is a
fixed disc of $T$. Now assume that $r>0$ and $u\in D[x_{0},r]-\left\{
x_{0}\right\} $ is any point such that $Tu\neq u$. Then we have $d(Tu,u)>0$.
Using the Pata Zamfirescu type $x_{0}$-mapping property, the hypothesis $%
d(Tu,x_{0})\leq r$ and Proposition \ref{prop3}, we obtain%
\begin{eqnarray}
d(Tu,u) &\leq &\frac{1-\varepsilon }{2}M(u,x_{0})+\Lambda \varepsilon
^{\alpha }\Psi (\varepsilon )\left[ 1+\left\Vert u\right\Vert +\left\Vert
x_{0}\right\Vert +\left\Vert Tu\right\Vert +\left\Vert Tx_{0}\right\Vert %
\right] ^{\beta }  \notag \\
&=&\frac{1-\varepsilon }{2}\max \left\{ d(u,x_{0}),\frac{%
d(Tu,u)+d(Tx_{0},x_{0})}{2},\frac{d(Tx_{0},u)+d(Tu,x_{0})}{2}\right\}  \notag
\\
&&+\Lambda \varepsilon ^{\alpha }\Psi (\varepsilon )\left[ 1+\left\Vert
u\right\Vert +\left\Vert x_{0}\right\Vert +\left\Vert Tu\right\Vert
+\left\Vert Tx_{0}\right\Vert \right] ^{\beta }  \notag \\
&\leq &\frac{1-\varepsilon }{2}\max \left\{ r,\frac{d(Tu,u)}{2},r\right\}
\notag \\
&&+\Lambda \varepsilon ^{\alpha }\Psi (\varepsilon )\left[ 1+\left\Vert
u\right\Vert +\left\Vert x_{0}\right\Vert +\left\Vert Tu\right\Vert
+\left\Vert Tx_{0}\right\Vert \right] ^{\beta }\text{.}  \label{eqn6}
\end{eqnarray}%
For $\varepsilon =0$, using the inequality (\ref{eqn6}), we get%
\begin{equation*}
d(Tu,u)\leq \frac{1}{2}\max \left\{ r,\frac{d(Tu,u)}{2}\right\} \text{.}
\end{equation*}%
Hence we get two cases as follows:

\textbf{Case 1:} If $\max \left\{ r,\frac{d(Tu,u)}{2}\right\} =r$ then we
have%
\begin{equation*}
d(Tu,u)\leq \frac{r}{2}\text{,}
\end{equation*}%
a contradiction with the definition of $r$.

\textbf{Case 2:} If $\max \left\{ r,\frac{d(Tu,u)}{2}\right\} =\frac{d(Tu,u)%
}{2}$ then we find%
\begin{equation*}
d(Tu,u)\leq \frac{d(Tu,u)}{2}\text{,}
\end{equation*}%
a contradiction.

Consequently, it should be $Tu=u$ and so $T$ fixes the disc $D[x_{0},r]$.
\end{proof}

We give some illustrative examples to show the validity of our obtained
results.

\begin{example}
\label{exm1} Let $X=%
\mathbb{R}
$ be the usual metric space with the metric $d(u,v)=\left\vert
u-v\right\vert $ for all $u,v\in
\mathbb{R}
$. Let us define the self-mapping $T:%
\mathbb{R}
\rightarrow
\mathbb{R}
$ as%
\begin{equation*}
Tu=\left\{
\begin{array}{ccc}
u & \text{if} & u\in \lbrack -4,4] \\
u+1 & \text{if} & u\in (-\infty ,-4)\cup (4,\infty )%
\end{array}%
\right. \text{,}
\end{equation*}%
for all $u\in
\mathbb{R}
$. Then

$\bullet $ The self-mapping $T$ is a Zamfirescu type $x_{0}$-mapping with $%
x_{0}=0$, $a=\frac{1}{2}$ and $b=0$. Indeed, we get%
\begin{equation*}
d(Tu,u)=1>0\text{,}
\end{equation*}%
for all $u\in (-\infty ,-4)\cup (4,\infty )$. Hence we find%
\begin{equation*}
d(Tu,u)=1\leq \frac{\left\vert u\right\vert }{2}=\max \left\{ ad(u,0),\frac{b%
}{2}\left[ d(0,u)+d(u+1,0)\right] \right\} \text{.}
\end{equation*}

$\bullet $ The self-mapping $T$ is a Pata type $x_{0}$-mapping with $x_{0}=0$%
, $\Lambda =\alpha =\beta =1$ and
\begin{equation*}
\Psi (u)=\left\{
\begin{array}{ccc}
0 & \text{if} & u=0 \\
\frac{1}{2} & \text{if} & u\in (0,1]%
\end{array}%
\right. .
\end{equation*}%
Indeed, we have%
\begin{equation*}
d(Tu,u)=1>0\text{,}
\end{equation*}%
for all $u\in (-\infty ,-4)\cup (4,\infty )$. So we obtain%
\begin{eqnarray*}
d(Tu,u) &=&1\leq \frac{\left\vert u\right\vert }{2}+\frac{\varepsilon }{2}+%
\frac{\varepsilon \left\vert u+1\right\vert }{2} \\
&=&\frac{1-\varepsilon }{2}\left\Vert u\right\Vert +\Lambda \varepsilon
^{\alpha }\Psi (\varepsilon )\left[ 1+\left\Vert u\right\Vert +\left\Vert
Tu\right\Vert \right] ^{\beta }\text{.}
\end{eqnarray*}

$\bullet $ The self-mapping $T$ is a Pata Zamfirescu type $x_{0}$-mapping
with $x_{0}=0$, $\Lambda =\alpha =\beta =1$ and%
\begin{equation*}
\Psi (u)=\left\{
\begin{array}{ccc}
0 & \text{if} & u=0 \\
\frac{1}{2} & \text{if} & u\in (0,1]%
\end{array}%
\right. .
\end{equation*}%
Indeed, we get%
\begin{equation*}
d(Tu,u)=1>0\text{,}
\end{equation*}%
for all $u\in (-\infty ,-4)\cup (4,\infty )$ and%
\begin{equation*}
M(u,0)=\max \left\{ \left\vert u\right\vert ,\frac{1}{2},\frac{\left\vert
u\right\vert +\left\vert u+1\right\vert }{2}\right\} =\max \left\{
\left\vert u\right\vert ,\frac{\left\vert u\right\vert +\left\vert
u+1\right\vert }{2}\right\} \text{.}
\end{equation*}%
Then we obtain two cases$:$

\textbf{Case }$1:$ Let $\left\vert u\right\vert >\frac{\left\vert
u\right\vert +\left\vert u+1\right\vert }{2}$. We find $M(u,0)=\left\vert
u\right\vert $ and%
\begin{eqnarray*}
d(Tu,u) &=&1\leq \frac{\left\vert u\right\vert }{2}+\frac{\varepsilon }{2}+%
\frac{\varepsilon \left\vert u+1\right\vert }{2} \\
&=&\frac{1-\varepsilon }{2}M(u,0)+\Lambda \varepsilon ^{\alpha }\Psi
(\varepsilon )\left[ 1+\left\Vert u\right\Vert +\left\Vert 0\right\Vert
+\left\Vert Tu\right\Vert +\left\Vert T0\right\Vert \right] ^{\beta }\text{.}
\end{eqnarray*}

\textbf{Case }$2:$ Let $\left\vert u\right\vert <\frac{\left\vert
u\right\vert +\left\vert u+1\right\vert }{2}$. We obtain $M(u,0)=\frac{%
\left\vert u\right\vert +\left\vert u+1\right\vert }{2}$ and%
\begin{eqnarray*}
d(Tu,u) &=&1\leq \frac{\left\vert u\right\vert }{4}+\frac{\varepsilon
\left\vert u\right\vert }{2}+\frac{\left\vert u+1\right\vert }{4}+\frac{%
\varepsilon \left\vert u+1\right\vert }{2}+\frac{\varepsilon }{2} \\
&=&\frac{1-\varepsilon }{2}M(u,0)+\Lambda \varepsilon ^{\alpha }\Psi
(\varepsilon )\left[ 1+\left\Vert u\right\Vert +\left\Vert 0\right\Vert
+\left\Vert Tu\right\Vert +\left\Vert T0\right\Vert \right] ^{\beta }\text{.}
\end{eqnarray*}

Also, we find%
\begin{equation*}
r=\underset{u\in X}{\inf }\left\{ d(Tu,u):Tu\neq u\right\} =1
\end{equation*}%
and%
\begin{equation*}
d(Tu,0)=d(u,0)\leq 1\text{,}
\end{equation*}%
for all $u\in D[0,1]-\{0\}$. Consequently, from Theorem \ref{thm1} $($resp.
Theorem \ref{thm2} and Theorem \ref{thm3}$)$, $T$ fixes the disc $D[0,1]$.
\end{example}

\begin{example}
\label{exm2} Let $X=[0,1]$ be the usual metric space. Let us define the
self-mapping $T:X\rightarrow X$ as%
\begin{equation*}
Tu=\left\{
\begin{array}{ccc}
u & \text{if} & u\in \{0,1\} \\
2u & \text{if} & u\in (0,1)%
\end{array}%
\right. \text{,}
\end{equation*}%
for all $u\in X$. Then $T$ is a Zamfirescu type $x_{0}$-mapping with $%
x_{0}=0 $, $a=0$ and $b=\frac{2}{3}$, but $T$ is not a Zamfirescu type $%
x_{0} $-mapping with $x_{0}=1$. Also we get $r=0$ and so $T$ fixes the point
$x_{0}=0$.
\end{example}

\begin{example}
\label{exm3} Let $X=%
\mathbb{R}
$ be the usual metric space. Let us define the self-mapping $T:%
\mathbb{R}
\rightarrow
\mathbb{R}
$ as%
\begin{equation*}
Tu=\left\{
\begin{array}{ccc}
u & \text{if} & u\in \lbrack -2,\infty ) \\
u+1 & \text{if} & u\in (-\infty ,-2)%
\end{array}%
\right. \text{,}
\end{equation*}%
for all $u\in
\mathbb{R}
$. Then $T$ is a Zamfirescu type $x_{0}$-mapping with $a=\frac{1}{2}$, $b=0$%
, both $x_{0}=0$ and $x_{0}=5$. We obtain $r=1$ whence by Theorem \ref{thm1}
$T$ fixes both of the discs $D[0,1]$ and $D[5,1]$.
\end{example}

\begin{example}
\label{exm4} Let $X=%
\mathbb{R}
$ be the usual metric space. Let us define the self-mapping $T:%
\mathbb{R}
\rightarrow
\mathbb{R}
$ as%
\begin{equation*}
Tu=\left\{
\begin{array}{ccc}
u & \text{if} & u\in \lbrack -1,1] \\
0 & \text{if} & u\in (-\infty ,-1)\cup (1,\infty )%
\end{array}%
\right. \text{,}
\end{equation*}%
for all $u\in
\mathbb{R}
$. Then we have $r=1$ and $T$ is not a Zamfirescu type $x_{0}$-mapping $($%
resp. a Pata type $x_{0}$-mapping and a Pata Zamfirescu type $x_{0}$-mapping$%
)$ with any $x_{0}\in X$ but $T$ fixes the disc $D[0,1]$.
\end{example}

Considering the above examples, we conclude the following remarks.

\begin{remark}
\label{rem1} $(1)$ The point $x_{0}$ satisfying the definition of a
Zamfirescu type $x_{0}$-mapping $($resp. a Pata type $x_{0}$-mapping and a
Pata Zamfirescu type $x_{0}$-mapping$)$ is a fixed point of the self-mapping
$T$. But the converse statement is not always true, that is, a fixed point
of $T$ does not always satisfy the definition of a Zamfirescu type $x_{0}$%
-mapping $($resp. a Pata type $x_{0}$-mapping and a Pata Zamfirescu type $%
x_{0}$-mapping$)$. For example, if we consider Example \ref{exm2} then $T$
fixes the point $x_{0}=1$, but the point $1$ does not satisfy the definition
of a Zamfirescu type $x_{0}$-mapping.

$(2)$ The choice of $x_{0}$ is independent from the number $r$ $($see
Example \ref{exm1}, Example \ref{exm2} and Example \ref{exm3}$)$.

$(3)$ The radius $r$ can be zero $($see Example \ref{exm2}$)$.

$(4)$ The number of $x_{0}$ satisfying the definition of a Zamfirescu type $%
x_{0}$-mapping $($resp. a Pata type $x_{0}$-mapping and a Pata Zamfirescu
type $x_{0}$-mapping$)$ can be more than one $($see Example \ref{exm3}$)$.

$(5)$ The converse statements of Theorem \ref{thm1}, Theorem \ref{thm2} and
Theorem \ref{thm3} is not always true $($see Example \ref{exm4}$)$.

$(6)$ The obtained fixed-disc results can be also considered as fixed-circle
results $($resp. fixed-point results$)$.
\end{remark}

\section{A Best Proximity Circle Application}

\label{sec:2} In this section, we define the notion of a best proximity
circle on a metric space. At first, we recall the definition of a best
proximity point and some basic concepts. Let $A$, $B$ be two nonempty
subsets of a metric space $(X,d)$. We consider the followings:

\begin{equation*}
\mathsf{d}\left( A,B\right) =\inf \left\{ d(u,v):u\in A\text{ and }v\in
B\right\} ,
\end{equation*}%
\begin{equation*}
A_{0}=\left\{ u\in A:d\left( u,v\right) =\mathsf{d}\left( A,B\right) \text{
for some }v\in B\right\}
\end{equation*}%
and%
\begin{equation*}
B_{0}=\left\{ v\in B:d\left( u,v\right) =\mathsf{d}\left( A,B\right) \text{
for some }u\in A\right\} .
\end{equation*}%
For a mapping $T:A\rightarrow B$, the point $u\in A$ is called a best
proximity point of $T$ if
\begin{equation*}
d\left( u,Tu\right) =\mathsf{d}\left( A,B\right) .
\end{equation*}%
If $T$ has more than one best proximity point then it is an interesting
problem to consider the geometric properties of these points. For this
purpose we define a circle $C_{x_{0},r}=\left\{ u\in A:d\left(
u,x_{0}\right) =r\right\} $ as the best proximity circle of $T$ if
\begin{equation*}
d\left( u,Tu\right) =\mathsf{d}\left( A,B\right) ,
\end{equation*}%
for all $u\in C_{x_{0},r}$. We note that the best proximity circle reduces
to a fixed circle of $T$ (the circle $C_{x_{0},r}$ is called as the fixed
circle of $T$ if $Tu=u$ for every $u\in C_{x_{0},r}$ \cite%
{Ozgur-Tas-malaysian}). Also if $C_{x_{0},r}$ has only one element then the
best proximity circle reduces to a best proximity point or a fixed point of $%
T$. Using this notion we give an application to a Pata type $x_{0}$-mapping.

\begin{definition}
\label{def4} Let $(X,d)$ be a metric space and $T:A\rightarrow B$ be a
mapping. Then $T$ is called a Pata type proximal $x_{0}$-contraction if
there exists $x_{0}\in A_{0}$ and $\Psi \in \Theta $ such that%
\begin{equation*}
d(x,u)\leq \frac{1-\varepsilon }{2}d(x,x_{0})+\Lambda \varepsilon ^{\alpha
}\Psi (\varepsilon )\left[ 1+\left\Vert x\right\Vert +\left\Vert
x_{0}\right\Vert \right] ^{\beta }\text{,}
\end{equation*}%
for all $x\in A$ and each $\varepsilon \in \lbrack 0,1]$, where $d(u,Tx)=%
\mathsf{d}(A,B)$, $\left\Vert x\right\Vert =d(x,x_{0})$ and $\Lambda \geq 0$%
, $\alpha \geq 1$, $\beta \in \lbrack 0,\alpha ]$ are any constants.
\end{definition}

\begin{proposition}
\label{prop4} If $T$ is a Pata type proximal $x_{0}$-contraction with $%
x_{0}\in A_{0}$ such that $TA_{0}\subset B_{0}$, then $x_{0}$ is a best
proximity point of $T$ in $A$.
\end{proposition}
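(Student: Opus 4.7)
The plan is to run the argument of Proposition \ref{prop2} essentially verbatim, but adapted to the proximal setting so that the conclusion $Tx_0 = x_0$ is replaced by the best-proximity identity $d(x_0, Tx_0) = \mathsf{d}(A,B)$. First I would use the hypothesis $TA_0 \subset B_0$: since $x_0 \in A_0$, the image $Tx_0$ lies in $B_0$, and so by the very definition of $B_0$ there exists some $u \in A$ (in fact $u \in A_0$) with $d(u, Tx_0) = \mathsf{d}(A,B)$. This $u$ is precisely what is needed to activate the defining inequality of a Pata type proximal $x_0$-contraction at the point $x = x_0$.

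With $x = x_0$, noting that $\|x_0\| = d(x_0,x_0) = 0$, the defining inequality collapses to
\begin{equation*}
d(x_0, u) \leq \frac{1-\varepsilon}{2}\cdot 0 + \Lambda \varepsilon^{\alpha}\Psi(\varepsilon)\bigl[1 + 0 + 0\bigr]^{\beta} = \Lambda \varepsilon^{\alpha}\Psi(\varepsilon),
\end{equation*}
valid for every $\varepsilon \in [0,1]$. Specializing to $\varepsilon = 0$ and using $\Psi(0) = 0$ (which holds because $\Psi \in \Theta$), the right-hand side vanishes, which forces $d(x_0, u) \leq 0$ and hence $u = x_0$. Therefore $d(x_0, Tx_0) = d(u, Tx_0) = \mathsf{d}(A,B)$, which is exactly the assertion that $x_0$ is a best proximity point of $T$ in $A$.

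I do not anticipate a serious obstacle. The only step requiring any care is the initial observation that the inclusion $TA_0 \subset B_0$ genuinely supplies a witness $u \in A$ for which the proximal contraction inequality applies; once this is in place, the remainder is a direct substitution mirroring Proposition \ref{prop2}, with the identity $\|x_0\| = 0$ doing all the work to kill the leading term on the right-hand side.
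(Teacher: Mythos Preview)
Your proposal is correct and follows essentially the same approach as the paper's proof: use $TA_0\subset B_0$ to obtain a witness $u\in A$ with $d(u,Tx_0)=\mathsf{d}(A,B)$, apply the proximal contraction inequality at $x=x_0$ so that $d(x_0,x_0)=0$ annihilates the first term, and then set $\varepsilon=0$ to force $d(x_0,u)\le 0$, giving $u=x_0$.
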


\begin{proof}
Let $x_{0}\in A_{0}$. Then we have $Tx_{0}\in B_{0}$ because of $%
TA_{0}\subset B_{0}$. Hence there exists $u\in A$ such that
\begin{equation*}
d(u,Tx_{0})=\mathsf{d}(A,B)\text{.}
\end{equation*}%
Using the Pata type proximal $x_{0}$-contractive property, we obtain%
\begin{equation}
d(x_{0},u)\leq \frac{1-\varepsilon }{2}d(x_{0},x_{0})+\Lambda \varepsilon
^{\alpha }\Psi (\varepsilon )\left[ 1+\left\Vert x_{0}\right\Vert
+\left\Vert x_{0}\right\Vert \right] ^{\beta }\text{.}  \label{eqn7}
\end{equation}%
For $\varepsilon =0$, using the inequality (\ref{eqn7}), we get%
\begin{equation*}
d(x_{0},u)\leq 0\text{,}
\end{equation*}%
which implies $x_{0}=u$. Consequently, $x_{0}$ is a best proximity point of $%
T$ in $A$, that is, $d(x_{0},Tx_{0})=\mathsf{d}(A,B)$.
\end{proof}

\begin{theorem}
\label{thm4} Let $\mu =\inf \left\{ d(x,u):x,u\in A\text{ such that }x\neq
u\right\} $. If $T$ is a Pata type proximal $x_{0}$-contraction with $%
x_{0}\in A_{0}$ such that $TA_{0}\subset B_{0}$ and $C_{x_{0},\mu }\subset
A_{0}$, then $C_{x_{0},\mu }$ is a proximity circle of $T$.
\end{theorem}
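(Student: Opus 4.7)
The plan is to mirror the argument of Theorem \ref{thm2} in the proximal setting, using $\mu$ in place of $r$ and the auxiliary point produced by $TA_0\subset B_0$ in place of $Tu$. First I would dispose of the degenerate case $\mu=0$: the circle $C_{x_0,0}$ reduces to the single point $x_0$, and Proposition \ref{prop4} already gives $d(x_0,Tx_0)=\mathsf{d}(A,B)$, so $C_{x_0,0}$ trivially qualifies as a best proximity circle of $T$.

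For the main case $\mu>0$, I would fix an arbitrary $x\in C_{x_0,\mu}$. Since $C_{x_0,\mu}\subset A_0$ and $TA_0\subset B_0$, the image $Tx$ lies in $B_0$, so by definition of $B_0$ there is some $u\in A$ with $d(u,Tx)=\mathsf{d}(A,B)$. This $u$ is the correct object to feed, together with $x$, into the defining inequality of a Pata type proximal $x_0$-contraction (Definition \ref{def4}), producing
\begin{equation*}
d(x,u)\leq \frac{1-\varepsilon}{2}\,d(x,x_0)+\Lambda\varepsilon^{\alpha}\Psi(\varepsilon)\bigl[1+\lVert x\rVert+\lVert x_0\rVert\bigr]^{\beta}.
\end{equation*}
Specializing to $\varepsilon=0$ (so the second term vanishes because $\Psi(0)=0$ and $\alpha\geq 1$) and using $d(x,x_0)=\mu$, this collapses to $d(x,u)\leq \mu/2$.

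The concluding step is to contradict the infimum defining $\mu$. If $x\neq u$, then since both points lie in $A$ the definition of $\mu$ forces $d(x,u)\geq \mu$, which is incompatible with $d(x,u)\leq \mu/2<\mu$. Hence $x=u$, and therefore $d(x,Tx)=d(u,Tx)=\mathsf{d}(A,B)$. Since $x\in C_{x_0,\mu}$ was arbitrary, $C_{x_0,\mu}$ is a best proximity circle of $T$.

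The only subtlety — and the step that I would check most carefully — is the comparison with $\mu$: one must verify that the auxiliary point $u$ really is available in $A$ (which is why the hypotheses $C_{x_0,\mu}\subset A_0$ and $TA_0\subset B_0$ are both needed) and that the infimum argument is applied to \emph{distinct} points of $A$, the case $x=u$ being precisely the desired conclusion rather than a gap. Everything else is a routine transcription of the fixed-disc proof of Theorem \ref{thm2} into the best proximity language.
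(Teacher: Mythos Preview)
Your proposal is correct and follows essentially the same argument as the paper's own proof: treat $\mu=0$ via Proposition~\ref{prop4}, then for $\mu>0$ use $C_{x_0,\mu}\subset A_0$ and $TA_0\subset B_0$ to produce $u\in A$ with $d(u,Tx)=\mathsf{d}(A,B)$, specialize the contractive inequality to $\varepsilon=0$ to obtain $d(x,u)\le \mu/2$, and contradict the definition of $\mu$ unless $x=u$. The only cosmetic difference is that the paper writes the contradiction as the single chain $d(x,u)\le \mu/2\le d(x,u)/2$, whereas you spell out the dichotomy $x=u$ versus $x\neq u$; your version is arguably clearer about where the infimum is invoked.
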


\begin{proof}
Let $\mu =0$. Then we have $C_{x_{0},\mu }=\{x_{0}\}$. From Proposition \ref%
{prop4}, $C_{x_{0},\mu }$ is a proximity circle of $T$. Now suppose that $%
\mu >0$. Let $x\in C_{x_{0},\mu }-\{x_{0}\}$. Then using the hypothesis $%
C_{x_{0},\mu }\subset A_{0}$, we have $x\in A_{0}$ and so $Tx\in B_{0}$.
Hence there exists $u\in A$ such that%
\begin{equation*}
d(u,Tx)=\mathsf{d}(A,B)\text{.}
\end{equation*}%
Using the Pata type proximal $x_{0}$-contractive property, we obtain%
\begin{equation}
d(x,u)\leq \frac{1-\varepsilon }{2}d(x,x_{0})+\Lambda \varepsilon ^{\alpha
}\Psi (\varepsilon )\left[ 1+\left\Vert x\right\Vert +\left\Vert
x_{0}\right\Vert \right] ^{\beta }\text{.}  \label{eqn8}
\end{equation}%
For $\varepsilon =0$, using the inequality (\ref{eqn8}), we get%
\begin{equation*}
d(x,u)\leq \frac{1}{2}d(x,x_{0})=\frac{\mu }{2}\leq \frac{d(x,u)}{2}\text{,}
\end{equation*}%
which is a contradiction. It should be $x=u$. Consequently, $C_{x_{0},\mu }$
is a proximity circle of $T$.
\end{proof}

\begin{corollary}
\label{cor1} Let $(X,d)$ be a metric space and $T:X\rightarrow X$ be a
mapping which satisfies the conditions of Definition \ref{def2}. Then $T$
has a fixed circle $C_{x_{0},\mu }$ in $X$.
\end{corollary}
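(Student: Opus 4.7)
The plan is to derive Corollary \ref{cor1} as a direct specialization of Theorem \ref{thm2}, using the elementary observation that a fixed disc always contains each of its concentric circles as a fixed circle.

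First, since $T:X\to X$ satisfies Definition \ref{def2}, i.e., $T$ is a Pata type $x_{0}$-mapping, Theorem \ref{thm2} applies and yields that the closed disc $D[x_{0},r]$ is a fixed disc of $T$, where $r$ is the number defined in (\ref{definition of r}). Explicitly, $Tu=u$ for every $u\in X$ with $d(u,x_{0})\le r$.

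Next, I would pass from the disc to the circle: because $C_{x_{0},r}=\{u\in X:d(u,x_{0})=r\}\subseteq D[x_{0},r]$, every point on $C_{x_{0},r}$ is fixed, so $C_{x_{0},r}$ is itself a fixed circle of $T$. The radius $\mu$ appearing in the statement is to be interpreted via the self-mapping specialization ($A=B=X$) of the best proximity circle framework introduced in Section \ref{sec:2}: in that setting $\mathsf{d}(A,B)=0$, so the relation $d(u,Tu)=\mathsf{d}(A,B)$ collapses to $Tu=u$ and a best proximity circle coincides with a fixed circle, with $\mu$ playing the same role as $r$ from (\ref{definition of r}). This identification, together with the fixed-disc-to-fixed-circle passage already recorded in Remark \ref{rem1}(6), completes the argument.

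There is no substantive obstacle in the argument itself, since Theorem \ref{thm2} does all the work. The only point requiring care is the reconciliation of the two notations $r$ (from Section \ref{sec:1}) and $\mu$ (from Section \ref{sec:2}) in the self-mapping case, which is a matter of bookkeeping rather than mathematical content; in particular, any attempt to route the proof through Theorem \ref{thm4} instead would require comparing the bracket term $[1+\|x\|+\|x_{0}\|]^{\beta}$ of Definition \ref{def4} with the larger term $[1+\|u\|+\|Tu\|]^{\beta}$ of Definition \ref{def2}, which is why the direct route via Theorem \ref{thm2} is preferable.
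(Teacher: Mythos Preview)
Your route differs from the paper's: the paper deduces the corollary in one line from Theorem~\ref{thm4} by specializing $A=B=X$, whereas you go through Theorem~\ref{thm2} and then restrict from the fixed disc to a concentric circle. Your observation that the bracket $[1+\|x\|+\|x_0\|]^{\beta}$ in Definition~\ref{def4} does not literally match $[1+\|u\|+\|Tu\|]^{\beta}$ in Definition~\ref{def2} is accurate and is indeed a wrinkle in the paper's one-line reduction (though harmless in practice, since the proof of Theorem~\ref{thm4} only ever invokes the inequality at $\varepsilon=0$, where that term vanishes).

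However, your own argument has a real gap at the point where you assert that ``$\mu$ plays the same role as $r$''. These are distinct quantities: $\mu=\inf\{d(x,u):x,u\in X,\ x\neq u\}$ (from Theorem~\ref{thm4}) is a property of the metric space alone, while $r=\inf\{d(Tu,u):Tu\neq u\}$ depends on $T$. In Example~\ref{exm1}, for instance, $\mu=0$ but $r=1$, so they are certainly not identified. What you actually need is the inequality $\mu\le r$, which is immediate because every $d(Tu,u)$ with $Tu\neq u$ is a distance between two distinct points of $X$ and hence is at least $\mu$. This yields $C_{x_0,\mu}\subseteq D[x_0,r]$, and then Theorem~\ref{thm2} does give that $C_{x_0,\mu}$ is a fixed circle. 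With this one-line correction your alternative route goes through; as written, you have only established that $C_{x_0,r}$ is fixed, which is not the circle named in the corollary.
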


\begin{proof}
The proof can be easily obtained from Theorem \ref{thm4} when $A=B=X$.
\end{proof}

Notice that Corollary \ref{cor1} is a special case of Theorem \ref{thm2},
that is, each of the fixed-disc results given in Theorem \ref{thm2} can be
considered as a fixed-circle theorem.

Finally, we give an example to Theorem \ref{thm4} following \cite{Jacob}.

\begin{example}
\label{exm5} Let $A=\left\{ (0,a):a\in \lbrack 0,1]\right\} $ and $B=\left\{
(1,b):b\in \lbrack 0,1]\right\} $ on $%
\mathbb{R}
^{2}$ with the metric $d:X\times X\rightarrow
\mathbb{R}
$ defined as $d(x,y)=\left\vert x_{1}-y_{1}\right\vert +\left\vert
x_{2}-y_{2}\right\vert $ for all $x=(x_{1},x_{2}),$ $y=(y_{1},y_{2})\in
\mathbb{R}
^{2}$. Let us define the mapping $T:A\rightarrow B$ as%
\begin{equation*}
T(0,x)=\left( 1,\frac{2x}{3}\right) \text{,}
\end{equation*}%
for all $(0,x)\in A$. Then $T$ is a Pata type proximal $x_{0}$-contraction
with $x_{0}=(0,0)$, $\Lambda =\alpha =\beta =1$ and%
\begin{equation*}
\Psi (u)=\left\{
\begin{array}{ccc}
0 & \text{if} & u=0 \\
\frac{1}{2} & \text{if} & u\in (0,1]%
\end{array}%
\right. .
\end{equation*}%
Indeed, we get $\mathsf{d}(A,B)=1$ and $\mu =0$. Now we show that $T$
satisfies the Pata type proximal $x_{0}$-contractive property for all $%
\varepsilon \in \lbrack 0,1]$.

Let $\varepsilon =0$. For all $(0,x)=x^{\prime }\in A$, we get%
\begin{equation*}
d(x^{\prime },u)=d\left( \left( 0,x\right) ,\left( 0,\frac{2x}{3}\right)
\right) =\frac{x}{3}\leq \frac{x}{2}=\frac{1}{2}d(x^{\prime },x_{0})\text{,}
\end{equation*}%
where $d(u,Tx)=\mathsf{d}(A,B)=1$.

Let $\varepsilon \in (0,1]$. For all $(0,x)=x^{\prime }\in A$, we get%
\begin{eqnarray*}
d(x^{\prime },u) &=&\frac{x}{3} \\
&\leq &\frac{x}{2}+\frac{\varepsilon }{2} \\
&=&\frac{1-\varepsilon }{2}d(x,x_{0})+\Lambda \varepsilon ^{\alpha }\Psi
(\varepsilon )\left[ 1+\left\Vert x\right\Vert +\left\Vert x_{0}\right\Vert %
\right] ^{\beta }\text{,}
\end{eqnarray*}%
where $d(u,Tx)=\mathsf{d}(A,B)=1$.

Hence $T$ is a Pata type proximal $x_{0}$-contraction and there exists a
best proximity circle $C_{x_{0},\mu }=\{(0,0)\}$ in $A$. Also the circle $%
C_{x_{0},\mu }$ can be considered as a best proximity point.
\end{example}


\begin{thebibliography}{99}
\bibitem{Jacob} G. K. Jacob, M. S. Khan, C. Park and S. Yun, On generalized
Pata type contractions, Mathematics 6 (2018), 25.

\bibitem{Mlaiki} N. Mlaiki, U. \c{C}elik, N. Ta\c{s}, N.Y. \"{O}zg\"{u}r and
A. Mukheimer, Wardowski type contractions and the fixed-circle problem on $S$%
-metric spaces, J. Math. 2018, Article ID 9127486.

\bibitem{Mlaiki-Axioms} N. Mlaiki, N. Ta\c{s} and N.Y. \"{O}zg\"{u}r, On the
fixed-circle problem and Khan type contractions, Axioms, 7 (2018), 80.

\bibitem{Ozgur-Tas-malaysian} N.Y. \"{O}zg\"{u}r and N. Ta\c{s}, Some
fixed-circle theorems on metric spaces, Bull. Malays. Math. Sci. Soc. 42 (4)
(2019), 1433-1449.

\bibitem{Ozgur-Tas-circle-thesis} N.Y. \"{O}zg\"{u}r and N. Ta\c{s},
Fixed-circle problem on $S$- metric spaces with a geometric viewpoint, Facta
Universitatis. Series: Mathematics and Informatics 34 (3) (2019), 459-472.

\bibitem{Ozgur-Tas-Celik} N.Y. \"{O}zg\"{u}r, N. Ta\c{s} and U. \c{C}elik,
New fixed-circle results on $S$-metric spaces, Bull. Math. Anal. Appl. 9 (2)
(2017), 10-23.

\bibitem{ozgur-aip} N.Y. \"{O}zg\"{u}r and N. Ta\c{s}, Some fixed-circle
theorems and discontinuity at fixed circle, AIP Conference Proceedings 1926,
020048 (2018).

\bibitem{Ozgur-simulation} N. Y. \"{O}zg\"{u}r, Fixed-disc results via
simulation functions, Turkish Journal of Mathematics. doi:
10.3906/mat-1812-44

\bibitem{Pant-Ozgur-Tas} R.P. Pant, N.Y. \"{O}zg\"{u}r and N. Ta\c{s}, On
discontinuity problem at fixed point, Bull. Malays. Math. Sci. Soc. (2018).
https://doi.org/10.1007/s40840-018-0698-6.

\bibitem{Pata} V. Pata, A fixed point theorem in metric spaces, J. Fixed
Point Theory Appl. 10 (2011), 299-305.

\bibitem{Tas math} N. Ta\c{s}, N. Y. \"{O}zg\"{u}r and N. Mlaiki, New types
of $F_{C}$-contractions and the fixed-circle problem, Mathematics 6 (2018),
no. 10, 188.

\bibitem{Tas} N. Ta\c{s}, Suzuki-Berinde type fixed-point and fixed-circle
results on $S$-metric spaces, J. Linear Topol. Algebra 7 (3) (2018), 233-244.

\bibitem{Tas-fbed} N. Ta\c{s}, Various types of fixed-point theorems on $S$%
-metric spaces, J. BAUN Inst. Sci. Technol. 20 (2) (2018), 211-223.

\bibitem{Zamfirescu} T. Zamfirescu, Fixed point theorems in metric spaces,
Arch. Math. 23 (1972), 292-298.
\end{thebibliography}
\end{document}